\DeclarePairedDelimiter\abs{\lvert}{\rvert}%
\DeclarePairedDelimiter\norm{\lVert}{\rVert}%
\let\oldabs\abs
\def\abs{\@ifstar{\oldabs}{\oldabs*}}
\let\oldnorm\norm
\def\norm{\@ifstar{\oldnorm}{\oldnorm*}}
\title{A geometrical summation method for the Riemann zêta function}
\author{
  Ulysse REGLADE \\
  Student at Mines Paristech\\
  \texttt{ulysse.reglade@mines-paristech.fr} \\
  %% examples of more authors
}
\newtheorem{theorem}{Theorem}[section]
\newtheorem{definition}{Definition}[section]
\begin{document}
\maketitle

\begin{abstract}
\textbf{Disclaimer}: \textit{I am no professional mathematician. Though, it seems to me that the intuition presented in this preprint has not yet been explored by the mathematical community. This intuition seems very consistent and makes predictions, these predictions can be verified numerically. Because this article is a preliminary work, some parts of the proof are not complete. They will be explicitly indicated}.
\newline

In this paper, we introduce a geometrical summation method that makes the original Riemann series converge over the critical strip. This method gives an analytical function, that coincides with zêta. This point of view allows us to introduce a quantity of
interest that seems to give a characterization of the non-trivial zeros of the
Riemann z\^eta function.

For: $z=x+iy, x\in\mathbb{R}_+^*, y\in\mathbb{R}^*$, zêta can be defined as:
\begin{center}
$\boxed{\zeta(z) = \underset{N\rightarrow\infty}{\lim} \sum_{n=1}^{N}\frac{1}{n^z} + \frac{1}{(N+1)^z}\frac{1}{1-z}\left( 1-x-\frac{y}{\tan(y\ln(\frac{N+2}{N+1}))} \right)}$
\end{center}

From here, we can show that if $z$ is a non trivial zero of zêta, the following quantity converges to a relative integer seemingly even. Though the reciprocal is false, it is natural to compute it for the known non-trivial zeros of zêta:

\begin{center}
$\boxed{
\zeta(z)=0 \Rightarrow \underset{n\rightarrow\infty}{\lim} \frac{-y\ln(n+1) - \underset{sum}{\arg}(\zeta_n(z)) + \arctan(\frac{y}{1-x})}{\pi}= \mathcal{U}_z, \mathcal{U}_z\in\mathbb{Z}
}$
\end{center}

\begin{center}
$\underset{sum}{\arg}(\zeta_n(z)) = \sum_{m=1}^{n}\arg\left(\frac{\sum_{l=1}^{m+1}\frac{1}{l^z}}{\sum_{l=1}^{m}\frac{1}{l^z}}\right)$
\end{center}

At the end of this document you can find the table of these values computed for the first 30 know zeros of zêta. This last identity seems to be deeply correlated with the Riemann hypothesis.

In this paper, we are taking full advantage of formal calculation algorithms, especially to compute some asymptotic expansions. Here is the GitHub link to all codes used for this work:

\begin{center}
\href{https://github.com/UlysseREGLADE/Zeta}{https://github.com/UlysseREGLADE/Zeta}
\end{center}

\end{abstract}

% keywords can be removed
\keywords{Riemann \and Zêta \and Summation method}

\bigskip
\bigskip
\bigskip
\bigskip
\bigskip
\bigskip
\bigskip
\bigskip
\bigskip
\bigskip

%\tableofcontents

\section{Introduction}

For convenience, we define for $ z \in \Omega $:

\begin{equation}
\begin{split}
&\zeta_n (z) = \sum_{j=1}^{n} \frac{1}{j^z}, n \in \mathbb{N}^* \\
&\zeta_0(z) = 0
\end{split}
\end{equation}

In the whole paper, $ z $ is defined as a complex number of $ \Omega $ an open set of $\mathbb{C}$. $ x $ and $ y $ are defined with no ambiguity as its real and complex parts:

\begin{equation}
\begin{split}
&x = \Re z \\
&y = \Im z
\end{split}
\end{equation}

We are not going to make a real difference between the complex representation of $z$ and its vectorial form $(x, y)$. Therefore, we will allow ourselves to use these representations quite freely.

Let’s now discuss the intuition that gave rise to this paper. I’m uncomfortable with the usual way of defining the zêta function \cite{BOOK:1}.

\begin{equation}
\zeta(z) = \sum_{n=1}^\infty\frac{1}{n^z}, x>1
\end{equation}

In this notation, information is lost on how the partials sums converge to zêta. To me, a more meaningful, but still not perfect way to define zêta would be something like:

\begin{equation}
\zeta(z) = \overline{\left\{ \zeta_n(z) \right\}_{n \in \mathbb{N}}}
\end{equation}

This definition is not equivalent to the usual one, it is indeed defined even if $x=1$. Though it is possible to do much better. What we are going to show is that, in a way, the zêta function can be defined as the center of a logarithmic asymptotic spiral described by the partial sums of the Riemann series  $\left\{\zeta_n(z)\right\}_{n\in\mathbb{N}}$. In fact, this sequence describes a converging spiral for $x>1$, a circle for $x=1$, and a diverging spiral for $x<1$. In each case, the value of zêta is the center of these spirals. We are going to take advantage from this fact to define zeta for: $x>0$.

\begin{figure}[h]
\begin{center}
\includegraphics[scale=0.4]{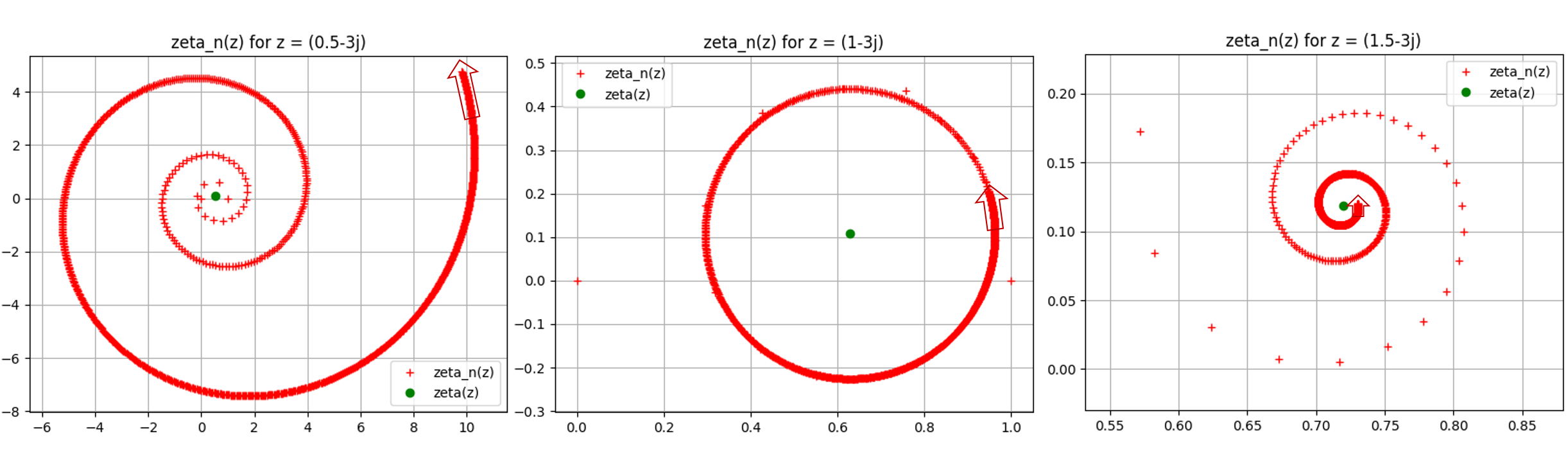}
\caption{Plot of the Riemann series, for: $z=\frac{1}{2}-3i$, $z=1-3i$, ans $z=\frac{3}{2}-3i$}
\label{Goe1}
\end{center}
\end{figure}

\section{The radial convergence, a geometrical summation method for the Riemann series}

\subsection{The radial convergence for the special case $ z \in \left\{ 1+iy, y \in \mathbb{R}^* \right\} $.}

In this subsection, $ \Omega $ is defined as:

\begin{equation}
\Omega = \left\{ 1+iy, y \in \mathbb{R}^* \right\}
\end{equation}

Let us define:

\begin{equation}
\Delta_n (z) = \left\{ \zeta_n(z) + \frac{te^{i\frac{\pi}{2}}}{(n+1)^z}, t \in \mathbb{R} \right\}, z \in \Omega, n \in \mathbb{N}
\end{equation}

$ \Delta_n(z) $ is a straight line of the complex plan. Considering $ \frac{1}{n^z} $ as a vector, it is clearly not collinear to $ \frac{1}{(n+1)^z} $. Indeed: $ \arg(\frac{n}{n+1}^z) = y \ln(\frac{n}{n+1}) \neq 0 $. Therefore, $ \Delta_n(z) $ and $ \Delta_{n+1}(z) $ must have an unique intersection point. Let us call it $ c_n(z) $:

\begin{equation}
c_n(z) = \Delta_n(z) \cap \Delta_{n+1}(z), n \in \mathbb{N}
\end{equation}

Let us adopt the following notation:

\begin{equation}
\delta_n(z) = c_{n}(z) - c_{n-1}(z), n \in \mathbb{N}^*
\end{equation}

\begin{figure}[h]
\begin{center}
\includegraphics[scale=0.5]{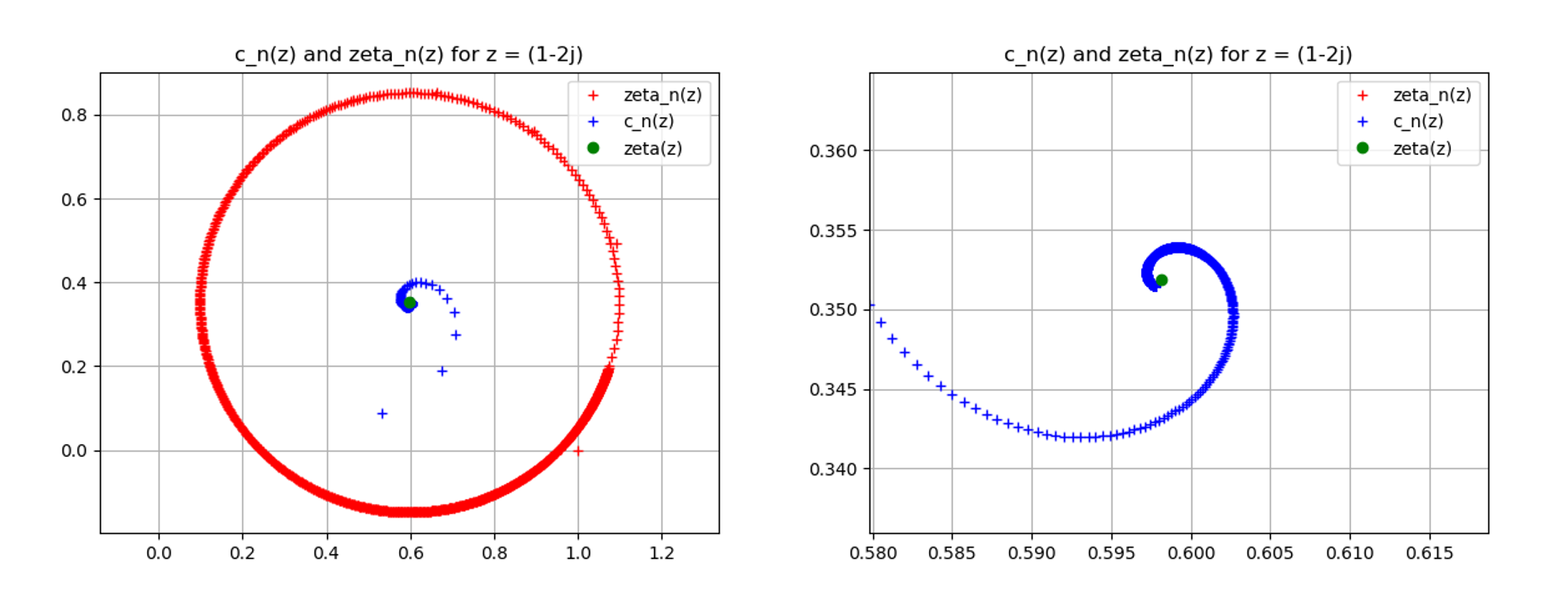}
\caption{Plot of $\left\{ c_n(z) \right\}_{n\in\mathbb{N}}$ for: $ z=1-2i $}
\label{Conv1}
\end{center}
\end{figure}

\textbf{Figure \ref{Conv1}} is a plot $\left\{ c_n(z) \right\}_{n\in\mathbb{N}}$, and we can enunciate our first theorem:

\begin{theorem}
For $ z \in \Omega $, $ \sum \delta_n(z) $ converges by Riemann sommation, and we can define with no ambiguity:
\begin{equation}
c(z) = \underset{n\rightarrow\infty}{\lim} c_n(z)
\end{equation}
\end{theorem}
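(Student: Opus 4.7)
The plan is to derive a closed-form expression for $c_n(z)$ by solving the linear intersection problem explicitly, and then to estimate $\delta_n(z)$ asymptotically. Writing the intersection condition $\zeta_n(z) + t_n\cdot i/(n+1)^z = \zeta_{n+1}(z) + s_n \cdot i/(n+2)^z$, substituting $\zeta_{n+1}(z) - \zeta_n(z) = 1/(n+1)^z$, and dividing through by $1/(n+1)^z$, the ratio $(n+1)^z/(n+2)^z$ factors as $\frac{n+1}{n+2}e^{-i\phi_n}$ with $\phi_n := y\ln\frac{n+2}{n+1}$. Separating the resulting complex equation into real and imaginary parts yields a $2\times 2$ real system whose solution is $t_n = -\cot\phi_n$, so
\begin{equation*}
c_n(z) = \zeta_n(z) - \frac{i\cot\phi_n}{(n+1)^z}.
\end{equation*}
Geometrically this says that $c_n$ sits at a ``radius'' of $|\cot\phi_n|/(n+1) \sim 1/|y|$ from $\zeta_n$, which already matches the picture of a limiting circle for $x=1$.

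From this closed form, the increment reads
\begin{equation*}
\delta_n(z) \;=\; \frac{1}{n^z} + \frac{i\cot\phi_{n-1}}{n^z} - \frac{i\cot\phi_n}{(n+1)^z}.
\end{equation*}
The key step is an asymptotic expansion in $1/n$. From $\phi_n = \frac{y}{n+1} - \frac{y}{2(n+1)^2} + O(1/n^3)$ one gets $\cot\phi_n = \frac{n+1}{y} + \frac{1}{2y} + O(1/n)$, and similarly for $\cot\phi_{n-1}$; expanding $(n+1)^{-z} = n^{-z}(1+1/n)^{-z}$, the leading piece $(i/y)\,n^{-iy}$ appearing in both cotangent terms of $\delta_n$ cancels, and the next $O(n^{-1-iy})$ corrections cancel against $1/n^z$ exactly because $\Re z = 1$. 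What survives is $\delta_n(z) = O(1/n^2)$.

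Once this bound is in place, $\sum_{n\geq 2}\delta_n(z)$ converges absolutely by comparison with $\sum 1/n^2$, so $c_n(z) = c_1(z) + \sum_{k=2}^n \delta_k(z)$ has a finite limit $c(z)$, which proves the theorem. The main obstacle is purely algebraic bookkeeping in the asymptotic expansion: one must carry each series ($\cot$, $(1+1/n)^{-z}$, and $\sin$ after rewriting $1+i\cot\phi_{n-1} = ie^{-i\phi_{n-1}}/\sin\phi_{n-1}$ if one prefers) to sufficient order so that all contributions not in $O(1/n^2)$ visibly cancel. This cancellation is delicate — it is precisely what pins $\Omega$ to the line $\Re z = 1$, since for $\Re z \neq 1$ the residual terms no longer collapse — and a computer-algebra verification of the expansion, as the author advocates in the introduction, is the cleanest way to certify it without error.
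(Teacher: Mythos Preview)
Your argument is correct and reaches the same endpoint $\delta_n(z)=O(1/n^2)$, but by a different route than the paper. The paper reads $|\delta_n(z)|$ directly off the geometry of the figure as
\[
\Bigl|\,\frac{1}{(n+1)\tan\!\bigl(-y\ln\tfrac{n+2}{n+1}\bigr)}-\sqrt{\frac{1}{n^2}+\frac{1}{n^2\tan\!\bigl(y\ln\tfrac{n+1}{n}\bigr)^2}}\;\Bigr|
\]
and then expands this real-valued expression to obtain $|\delta_n|\sim\frac{1}{2n^2}\,|y+1/y|$. You instead solve the $2\times 2$ intersection system to produce the closed complex form $c_n(z)=\zeta_n(z)-i\cot\phi_n/(n+1)^z$ and expand the increment $\delta_n$ directly in $\mathbb{C}$, exhibiting the cancellation that forces the $O(1/n^2)$ bound. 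Your closed form for $c_n$ is precisely the $x=1$ specialization of the telescoped expression the paper only derives later, in Section~2.5; so your approach is more algebraic and effectively short-circuits that subsequent computation, while the paper's geometric approach delivers the explicit leading constant with less bookkeeping and keeps the whole estimate on the real side.
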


\begin{figure}[h]
\begin{center}
\includegraphics[scale=1]{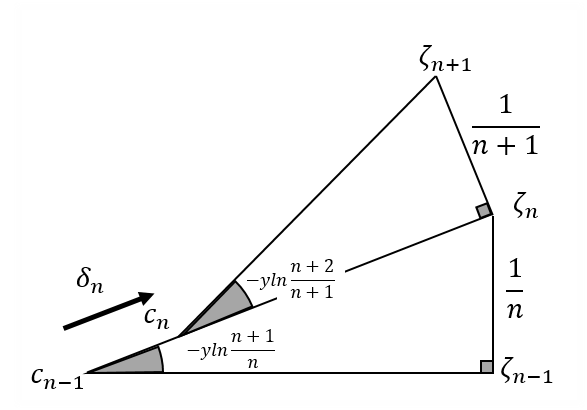}
\caption{Geometrical view of the situation}
\label{Goe1}
\end{center}
\end{figure}

\begin{proof}

Using geometrical relationships, we can write from the \textbf{Figure \ref{Goe1}}:

\begin{equation}
\abs{\delta_n(z)} = \abs{ \frac{1}{(n+1)\tan(-y \ln(\frac{n+2}{n+1}) )} - \sqrt{\frac{1}{n^2} + \frac{1}{n^2\tan( y \ln(\frac{n+1}{n}) )^2}} }
\label{delta_re_1}
\end{equation}

From \textbf{Equation \ref{delta_re_1}}, we can calculate the asymptotic expansion of $ \delta_n(z) $. This calculation is quite tedious, but at the end of the day one can show that:

\begin{equation}
\delta_n(z) \underset{n\rightarrow\infty}{=} \frac{1}{2n^2}\abs{y + \frac{1}{y}} + o(\frac{1}{n^2})
\end{equation}

The series converges by Riemann sommation. This proves the theorem.

\end{proof}

In addition, we can observe that:

\begin{equation}
\abs{c_n(z) - \zeta_n(z)}  = \frac{1}{(n+1)\tan(\abs{y}\ln(\frac{n+2}{n+1}) )} \underset{n\rightarrow\infty}{=} \frac{1}{\abs{y}} + o(1)
\label{ray_cer}
\end{equation}

Which leads us to our second theorem:

\begin{theorem}[Asymptotic circle]
Lets note $ \mathcal{C}^a_b $ the circle of radius b and center a. We have:
\begin{equation}
\overline{\left\{\zeta_n\right\}_{n\in\mathbb{N}}} \subset \mathcal{C}^{c(z)}_{\frac{1}{\abs{y}}}
\end{equation}
\label{qsd}
\end{theorem}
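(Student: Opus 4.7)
The plan is to deduce the result from the triangle inequality together with the two facts already established earlier in the section: Theorem 2.1 gives $c_n(z)\to c(z)$, and Equation \ref{ray_cer} gives $|\zeta_n(z)-c_n(z)|\to 1/|y|$. Once we show $|\zeta_n(z)-c(z)|\to 1/|y|$, any accumulation point of the sequence must lie on the circle, which is exactly the inclusion the theorem claims (interpreting $\overline{\{\zeta_n\}}$ as the set of accumulation points of the spiral, since the individual $\zeta_n$ are off-circle).

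First I would apply the reverse triangle inequality
\begin{equation}
\bigl||\zeta_n(z)-c(z)|-|\zeta_n(z)-c_n(z)|\bigr|\;\leq\;|c_n(z)-c(z)|.
\end{equation}
By Theorem 2.1 the right-hand side tends to $0$, and by Equation \ref{ray_cer} the subtracted term tends to $1/|y|$. Therefore $|\zeta_n(z)-c(z)|\to 1/|y|$ as $n\to\infty$.

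Next, let $\ell$ be any accumulation point of $\{\zeta_n(z)\}_{n\in\mathbb{N}}$, so there is a subsequence $\zeta_{n_k}(z)\to\ell$. Continuity of the modulus yields
\begin{equation}
|\ell-c(z)|=\lim_{k\to\infty}|\zeta_{n_k}(z)-c(z)|=\frac{1}{|y|},
\end{equation}
so $\ell\in\mathcal{C}^{c(z)}_{1/|y|}$. Since the circle is closed in $\mathbb{C}$, the collection of all such $\ell$ is contained in it, which gives the stated inclusion.

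The step I expect to cause the only real conceptual friction is the interpretation of $\overline{\{\zeta_n\}_{n\in\mathbb{N}}}$. Taken literally as topological closure in $\mathbb{C}$, the inclusion is false because each partial sum $\zeta_n(z)$ sits at distance $1/((n+1)\tan(|y|\ln((n+2)/(n+1))))\neq 1/|y|$ from $c(z)$; only the limit of these distances equals $1/|y|$. So the statement is really about the asymptotic behaviour of the spiral and is best read as an assertion about its derived set. With that understanding the argument above is routine; no new estimate beyond Equation \ref{ray_cer} and Theorem 2.1 is needed.
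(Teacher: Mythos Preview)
Your proof is correct and follows essentially the same route as the paper: the paper also sandwiches $|\zeta_n(z)-c(z)|$ via the triangle inequality using $|\zeta_n(z)-c_n(z)|\to 1/|y|$ and $|c_n(z)-c(z)|\to 0$, and then concludes that the distance from $\zeta_n(z)$ to the circle tends to zero. Your caveat about reading $\overline{\{\zeta_n\}}$ as the derived set rather than the literal closure is well taken; the paper's own argument proves only $d(\mathcal{C}^{c(z)}_{1/|y|},\zeta_n(z))\to 0$ and so implicitly relies on the same interpretation.
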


\begin{proof}
By triangular inequality, we can write:

\begin{equation}
\abs{\zeta_n(z)-c_n(z)} - \abs{c_n(z)-c(z)} \leq \abs{\zeta_n(z)-c(z)} \leq \abs{\zeta_n(z)-c_n(z)} + \abs{c_n(z)-c(z)}
\label{ineq}
\end{equation}

Let us denote $ d(C^{c(z)}_{\frac{1}{\abs{y}}}, \zeta_n(z)) $ the distance between the circle and the sequence:

\begin{equation}
d(\mathcal{C}^{c(z)}_{\frac{1}{\abs{y}}}, \zeta_n(z)) = \abs{ \abs{\zeta_n(z)-c(z)} - \frac{1}{\abs{y}} }
\end{equation}

We have:

\begin{equation}
\begin{split}
&\abs{\zeta_n(z)-c_n(z)} = \frac{1}{\abs{y}} + o(1) \\
&\abs{c_n(z)-c(z)} = o(1)
\end{split}
\end{equation}

From \textbf{Equation \ref{ineq}} we can conclude that:

\begin{equation}
\underset{n\rightarrow\infty}{\lim} d(\mathcal{C}^{c(z)}_{\frac{1}{\abs{y}}}, \zeta_n(z)) = 0
\end{equation}

which finishes the proof.

\end{proof}

From the \textbf{Figure \ref{Conv1}}, one can expect $ \left\{ \zeta_n(z) \right\}_{n \in \mathbb{N}} $ to populate densely the circle, which is a reasonable conjecture. It would lead to an equality and not a simple inclusion in \textbf{Theorem \ref{qsd}}. Though, the demonstration of this conjecture is not necessary for the rest of the proof.

An other reasonable hypothesis is that $c$ is actually $\zeta$. But as \textbf{Equation \ref{delta_re_1}} only applies for $ x=1 $, we need first to extend the definition of $c$ to a bigger open set and show that it is analytical and coincides with the Riemann zêta function.

\subsection{The radial convergence for $ z \in \left\{ x+iy, x>0, y \in \mathbb{R}^* \right\} $}
\label{sec:headings}

For the rest this section, we define $ \Omega $ and $\omega$ as:

\begin{equation}
\begin{split}
\Omega = \left\{ x+iy, x>0, y \in \mathbb{R}^*\right\} \\
\omega = \left\{ x+iy, x>1, y \in \mathbb{R}^*\right\}
\end{split}
\end{equation}

To understand the rest of the proof, we first need to make the following observation:

\begin{figure}[h]
\begin{center}
\includegraphics[scale=0.5]{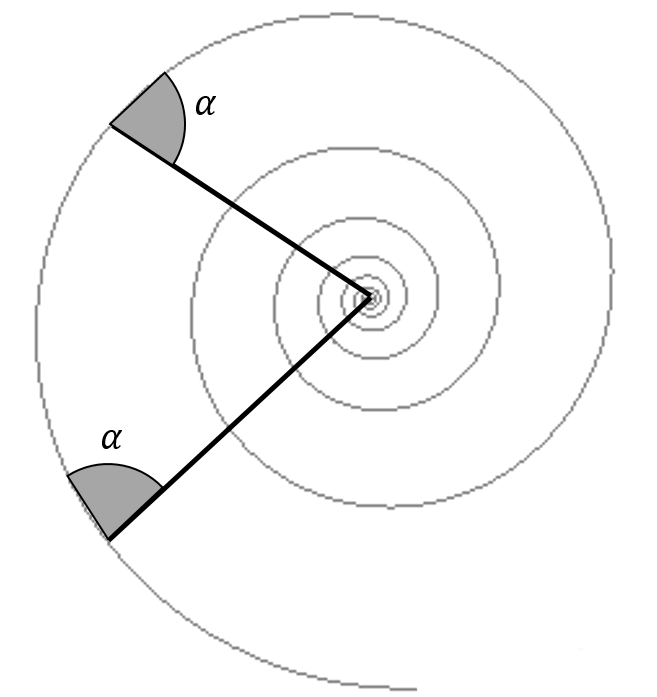}
\caption{Representation of a logarithmic spirale}
\label{Spi}
\end{center}
\end{figure}

Let us consider the integral test for convergence of $ \zeta_n(z) $:

\begin{equation}
\begin{split}
\int n^{-z} dn &= \frac{n^{1-z}}{1-z} = \frac{1}{1-z}e^{(1-x)\ln(n)}e^{-iy \ln(n)} = \frac{1}{1-z}e^{\frac{x-1}{y}\theta}e^{i\theta} \\
\theta &= -y \ln(n)
\end{split}
\end{equation}

We recognize the definition of a logarithmic spiral \cite{WEBSITE:1} in the complex plan. Such a spiral intersects its radius with a constant angle $ \alpha $, like it is shown in \textbf{Firgure \ref{Spi}}. In this case, we can compute this value, and we find:

\begin{equation}
\alpha(z) = \left\{
    \begin{array}{ll}
        \frac{\pi}{2} - \arctan(\frac{1-x}{y}) & y > 0 \\
        \frac{3\pi}{2} - \arctan(\frac{1-x}{y}) & y < 0
    \end{array}
\right.
\end{equation}

We can see that $ x = 1 $ implies $ \alpha=\frac{\pi}{2} [\pi] $. Therefore, it is natural to give a new definition to $ \Delta_n(z) $:

\begin{equation}
\Delta_n (z) = \left\{ \zeta_n(z) + \frac{te^{i\alpha}}{(n+1)^z}, t \in \mathbb{R} \right\}, z \in \Omega, n \in \mathbb{N}
\end{equation}

For the same reason than in the first section, we can define with no ambiguity the sequences $ \left\{ c_n(z) \right\}_{n \in \mathbb{N}} $ and $ \left\{ \delta_n(z) \right\}_{n \in \mathbb{N}^*} $.

\begin{figure}[h]
\begin{center}
\includegraphics[scale=0.5]{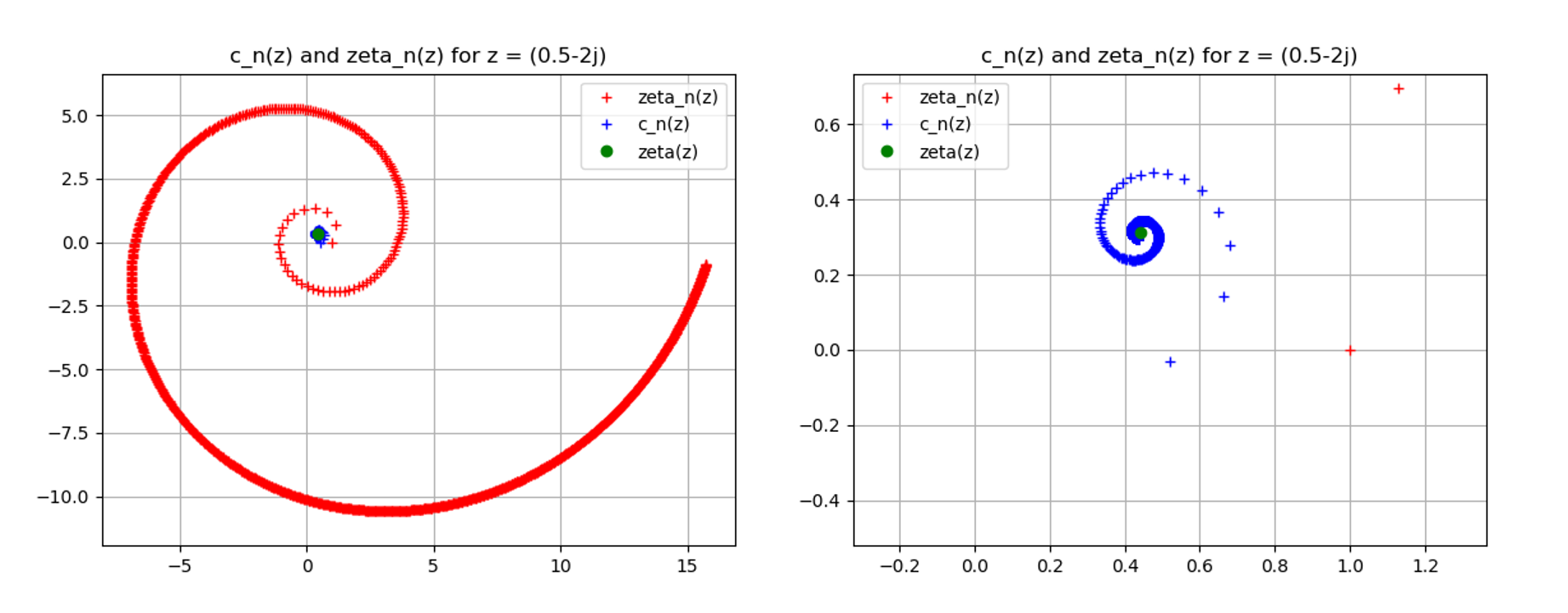}
\caption{Plot of the convergence for $ z=0.5-2i $}
\label{Conv2}
\end{center}
\end{figure}

From the \textbf{Figure \ref{Conv2}}, it seems reasonable to expect $ \left\{ c_n(z) \right\}_{n \in \mathbb{N}} $ to converge over the critical strip. It can even already conjecture that their limit coincides with $\zeta(z)$.

\subsection{Existence and continuity of $ c: \Omega \rightarrow \mathbb{C} $}

First, let us remind the definition of a domination function:

\begin{definition}
$ D_d:\mathbb{R} \rightarrow \mathbb{R} $ dominates $ f_n:\mathbb{C} \rightarrow \mathbb{C} $ over $ \Omega_d $ if and only if:

\begin{equation}
\exists n_0 \in \mathbb{N} \vert \forall z \in \Omega_d, \forall n>n_0, D_d(n) > \abs{f_n(z)}
\end{equation}
\end{definition}

We are now ready to formulate the weak version of what will be called from now the radial convergence for the Riemann zêta function.

\begin{theorem}
For $ z \in \Omega $, the sequence $ \left\{c_n(z)\right\}_{n \in \mathbb{N}} $ converges uniformly to $ c(z) $. $ c: \Omega \rightarrow \mathbb{C} $ is continuous over $ \Omega$, and we have:
\begin{equation}
c(z)=\zeta(z), z \in \omega
\end{equation}
$ c $ is complex-differentiable (i.e. holomorphic) over $ \omega \subset \Omega $, and it coincides with $ \zeta $ over this open set.
\end{theorem}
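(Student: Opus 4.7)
The plan is to (i) extract an explicit closed form for $c_n(z)$ from the line-intersection construction, (ii) establish a summable uniform bound on $\abs{\delta_n(z)}$ over compact subsets $K \subset \Omega$ so that $c$ exists and is continuous, and (iii) transfer holomorphy from the classical $\zeta$ via the identification $c = \zeta$ on $\omega$. Concretely, intersecting the two lines $\Delta_n(z)$ and $\Delta_{n+1}(z)$ with the generalized angle $\alpha(z)$ yields an expression of the form
\begin{equation}
c_n(z) = \zeta_n(z) + \frac{R_n(z)}{(n+1)^z}, \qquad R_n(z) = \frac{1}{1-z}\left(1 - x - \frac{y}{\tan(y \ln((n+2)/(n+1)))}\right),
\end{equation}
as suggested by the boxed identity in the abstract. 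Each $R_n$ is real-analytic in $(x,y)$ but not holomorphic in $z$, since $\alpha(z)$ mixes $x$ and $y$; this is fine because we only need continuity of $R_n$, which is clear as soon as $n$ is large enough that $y\ln((n+2)/(n+1))$ stays inside $(-\pi/2,\pi/2)$ on a given compact.

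Next, I would bound $\abs{\delta_n(z)}$ uniformly on a compact $K \subset \Omega$ by a direct extension of the asymptotic computation already carried out in the $x=1$ case. Expanding $\ln((n+2)/(n+1)) = 1/(n+1) + O(1/n^2)$ and $\tan(\epsilon) = \epsilon + O(\epsilon^3)$ inside the formula above and then forming $\delta_n = c_n - c_{n-1}$ should produce an asymptotic of the form $\abs{\delta_n(z)} = O(n^{-\beta})$ with $\beta > 1$ uniform on $K$; one expects $\beta = 1 + x_0$, where $x_0 = \min_{z \in K} \Re z > 0$, matching $\beta = 2$ in the already-treated $x=1$ case. This provides a dominating sequence $D_d(n) = C_K / n^{1+x_0}$ in the sense of the paper's definition, with $\sum D_d(n) < \infty$, so the tails of $\sum \delta_n(z)$ are uniformly Cauchy on $K$. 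Hence $c(z) := \lim_n c_n(z)$ exists, $c_n \to c$ uniformly on compacts of $\Omega$, and $c$ is continuous on $\Omega$.

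For $z \in \omega$ one has $\abs{c_n(z) - \zeta_n(z)} = \abs{R_n(z)} (n+1)^{-x} = O(n^{1-x})$ (using $\abs{R_n} = O(n)$ from the $1/\tan$ term, bounded uniformly on compacts since $\omega$ is away from $y=0$ and from $z=1$), which tends to $0$ uniformly on compacts of $\omega$. Meanwhile $\zeta_n(z) \to \zeta(z)$ by absolute convergence of the Dirichlet series for $x>1$. Therefore $c(z) = \zeta(z)$ on $\omega$, and holomorphy of $c$ on $\omega$ is inherited directly from that of $\zeta$.

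The main obstacle is the asymptotic bookkeeping for $\abs{\delta_n(z)}$ in the generalized $\alpha$-geometry: because $\alpha(z)$ now depends on $z$ through $\arctan((1-x)/y)$, the closed-form intersection of the two lines is considerably messier than in the $x=1$ case already treated, and one must verify that the leading-order cancellations still occur and that every error term in the expansion remains uniformly controlled as $z$ varies over a compact subset of $\Omega$ bounded away from the real axis and from $x=0$. Once that asymptotic is secured, the remainder of the argument — uniform Cauchyness, continuity, and identification with $\zeta$ on $\omega$ — is routine.
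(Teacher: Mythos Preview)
Your outline is correct and matches the paper's argument in all essential respects: the asymptotic $\abs{\delta_n(z)} \sim C(z)/n^{1+x}$ with a locally uniform constant, a dominating sequence of the form $C_K/n^{1+x_0}$ on compacts, continuity of $c$ from normal convergence, and the identification $c=\zeta$ on $\omega$ via $\abs{c_n-\zeta_n}=O(n^{1-x})\to 0$. The holomorphy claim on $\omega$ is, as you say, immediate from $c=\zeta$ there.

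The one organizational difference worth flagging is that you start from the telescoped closed form $c_n=\zeta_n+R_n/(n+1)^z$ taken from the abstract, whereas the paper at this stage works directly with the geometric expression for $\delta_n$ (involving $1/\sin(y\ln\frac{n+1}{n})$ and $1/\tan(y\ln\frac{n+2}{n+1})$) and only derives the closed form later when proving complex-differentiability. Your route is slightly cleaner for bounding $\abs{\delta_n}$, since differencing the closed form makes the $1/n^{1+x}$ cancellation more transparent; the paper instead isolates a function $d_{x,y}(n)$ and invokes a separate technical lemma to trap it near its limit $(x^2+y^2)/2$, precisely to control the $\sin$/$\tan$ singularities for small $n$ and large $\abs{y}$ that you allude to. Either organization leads to the same dominating bound and the same conclusion.
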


\begin{figure}[h]
\begin{center}
\includegraphics[scale=1]{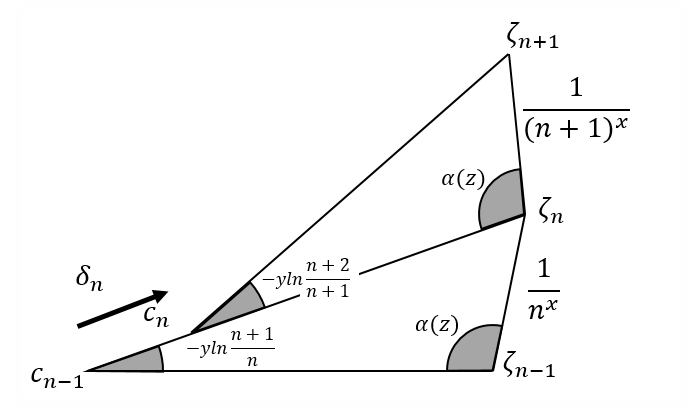}
\caption{Geometrical view of the situation for $ \alpha \in ]0,\pi[\cup]\pi,2\pi[ $}
\label{Goe2}
\end{center}
\end{figure}

\begin{proof}
First, we observe from \textbf{Figure \ref{Goe2}} that we can write $ \delta_n(z) $ as:

\begin{equation}
\delta_n(z) = \frac{1}{n^{x}\sqrt{1+(\frac{1-x}{y})^2}}\left(
\frac{1}{\sin(-y\ln\frac{n+1}{n})} - \frac{1}{(1+\frac{1}{n})^{x}}\left(\frac{1}{\tan(-y \ln(\frac{n+2}{n+1}))} + \frac{1-x}{y}\right)\right)e^{i(\alpha(z)-y \ln(n+1))}
\label{delta_n_1}
\end{equation}

To obtain this result, we have first computed the algebrical module of $ \delta_n(z) $ only using geometrical relations in the \textbf{Figure \ref{Goe2}}. Then, we compute the algebrical argument of $ \delta_n(z) $ by induction and show that it is in fact: $ \alpha(z) - y \ln(n+1) $.

We also work out the expression of $ c_0(z) $ by hand. It can be neatly written as:

\begin{equation}
c_0(z) = 1 - \frac{1}{1-z}\frac{y2^{-iy}}{\sin(y\ln2)}
\label{eq_c_0}
\end{equation}

This enable us to write:

\begin{equation}
c_n(z) = c_0(z) + \sum_{i=1}^{n} \delta_i(z)
\end{equation}

Then, just like in the first section, we compute the asymptotic expansion of $ \abs{\delta_n(z)} $ as $ n $ goes to infinity:

\begin{equation}
\abs{\delta_n(z)} = \frac{1}{n^{x}\sqrt{1+(\frac{1-x}{y})^2}}\abs{
\frac{1}{\sin(y\ln\frac{n+1}{n})} - \frac{1}{(1+\frac{1}{n})^{x}}\left(\frac{1}{\tan(y \ln(\frac{n+2}{n+1}))} + \frac{x-1}{y}\right)}
\label{dfg}
\end{equation}

We find:

\begin{equation}
\abs{\delta_n(z)} \underset{n\rightarrow\infty}{=} \frac{1}{n^{x +1}}\frac{x^2+y^2}{2\sqrt{(1-x)^2+y^2}} = \frac{1}{n^{x +1}}\frac{\abs{z}^2}{2\abs{1-z}} + o(\frac{1}{n^{1+x}})
\label{abs_delta}
\end{equation}

$ c_0(z) $ and $ \delta_n(z) $ are continuous with respect to $ z $ over $ \Omega $. It can be even say right away that they are real-differentiable.

Now, a domination function needs to be chosen. Though, it is not that easy in this case. Indeed, for large values of $y$, because of the presence of $\frac{1}{\sin(y\ln\frac{n+1}{n})}$ and $\frac{1}{\tan(y\ln\frac{n+2}{n+1})}$ in the expression of $\abs{\delta_n}$, we can expect $\abs{\delta_n}$ to explode for the first values of $n$.

From \textbf{Equation \ref{dfg}}, we can give the following expression for $\abs{\delta_n}$:

\begin{equation}
\abs{\delta_n(z)} = \frac{1}{n^{x}\abs{1-z}}\abs{
\frac{y}{\sin(y\ln\frac{n+1}{n})} - \frac{y}{(1+\frac{1}{n})^{x}\tan(y \ln(\frac{n+2}{n+1}))} + \frac{1-x}{(1+\frac{1}{n})^{x}}}
\end{equation}

We introduce: $ d_{x, y}:\mathbb{R}_+^* \rightarrow \mathbb{R} $, the function defined by:

\begin{equation}
d_{x, y}(n) = n\left(
\frac{y}{\sin(y\ln\frac{n+1}{n})} - \frac{y}{(1+\frac{1}{n})^{x}\tan(y \ln(\frac{n+2}{n+1}))} + \frac{1-x}{(1+\frac{1}{n})^{x}}\right)
\end{equation}

The limit of $d_{x, y}$ is:
\begin{equation}
\underset{n\rightarrow\infty}{\lim} d_{x, y}(n) = \frac{x^2+y^2}{2}
\end{equation}

The detailed study of $d_{x, y}$ is necessary in order to be able to properly dominate $\delta_n$.

Let us denote $\Omega_d$ a bounded open set of $\Omega$:

\begin{equation}
\begin{split}
a &= \underset{z \in \Omega_d}{inf} x \\ A &= \underset{z \in \Omega_d}{sup} x \\
b &= \underset{z \in \Omega_d}{inf} \abs{y} \\ B &= \underset{z \in \Omega_d}{sup} \abs{y}
\end{split}
\end{equation}

\begin{theorem}
Let $\mathcal{O}_b(\Omega)$ the set of the bounded open sets of $\Omega$. We can define:

\begin{equation}
n_0 : \left|\vtop{\hbox{ $\mathcal{O}_b(\Omega) \to \mathbb{N}$}\hbox{ $\Omega_d \mapsto n_0(\Omega_d)$}}\right.
\end{equation}

This function is such as:

\begin{equation}
\forall \Omega_d \in \mathcal{O}_b(\Omega), \forall z \in \Omega_d, n>n_0(\Omega_d) \Rightarrow \frac{1}{4}(x^2+y^2) < d_{x,y}(n) < \frac{3}{4}(x^2+y^2)
\end{equation}

\end{theorem}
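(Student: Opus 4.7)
The plan is to upgrade the pointwise limit $\lim_{n\to\infty} d_{x,y}(n) = \frac{x^2+y^2}{2}$, already computed, into a uniform estimate of the form $d_{x,y}(n) = \frac{x^2+y^2}{2} + O(1/n)$, where the implicit constant depends only on the geometric bounds $a,A,b,B$ of $\Omega_d$ and not on the specific $z \in \Omega_d$. Reading the boundedness hypothesis as requiring $\overline{\Omega_d}$ to be compactly contained in $\Omega$, I may assume $a^2+b^2>0$, so that the remainder can eventually be driven below $\tfrac14(a^2+b^2) \le \tfrac14(x^2+y^2)$ by choosing $n$ large, which gives the desired two-sided inequality.

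First I would introduce $u_n = y\ln(1+1/n)$ and $v_n = y\ln(1+1/(n+1))$. Since $|y|\le B$, both satisfy $|u_n|,|v_n|=O(B/n)$ uniformly on $\Omega_d$, which legitimizes the Taylor expansions
\[
\frac{1}{\sin u} = \frac{1}{u} + \frac{u}{6} + O(u^3), \qquad \frac{1}{\tan u} = \frac{1}{u} - \frac{u}{3} + O(u^3),
\]
with remainder constants depending only on $B$. Combined with the elementary expansions $\tfrac{1}{\ln(1+1/n)} = n+\tfrac12+O(1/n)$ and $(1+1/n)^{-x} = 1 - x/n + O(A^2/n^2)$, the latter uniform for $x\in[a,A]$, every error term is controlled by $C(a,A,B)/n$. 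Substituting these into the three summands of $d_{x,y}(n)$ and collecting like powers of $1/n$ reproduces the same bookkeeping that led to \textbf{Equation \ref{abs_delta}}, except that every remainder is now tracked uniformly in $(x,y)$. The leading term must collapse to $\tfrac{x^2+y^2}{2}$ on account of the known limit; I would verify this cancellation explicitly to catch any algebraic slip, thereby obtaining
\[
\Bigl| d_{x,y}(n) - \frac{x^2+y^2}{2} \Bigr| \le \frac{C(a,A,B)}{n} \quad \text{for all } z \in \Omega_d.
\]
Then it suffices to set $n_0(\Omega_d) := \lceil 4C(a,A,B)/(a^2+b^2)\rceil$ to close the argument.

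The main obstacle is not conceptual but combinatorial: the composite Taylor expansion produces several nested $O(\cdot)$ terms whose constants must all be tracked in $a,A,B$ instead of being allowed to depend on the specific $(x,y)$. The one genuine subtlety is the need for $a^2+b^2>0$: if $\Omega_d$ were permitted to approach the boundary of $\Omega$ (so that $x$ or $|y|$ could tend to zero inside $\Omega_d$), then $d_{x,y}(n)$ and $\tfrac{x^2+y^2}{2}$ could simultaneously be arbitrarily small, and no single $n_0$ would suffice. This is precisely what the qualifier \emph{bounded} is meant to rule out in the present statement.
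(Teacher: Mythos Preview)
Your approach is sound and is in spirit the same as the paper's: both rest on turning the pointwise asymptotic $d_{x,y}(n)\to\tfrac{x^2+y^2}{2}$ into a uniform one by expanding the trigonometric and logarithmic pieces in $1/n$ and tracking all remainders in terms of $a,A,b,B$ only. The execution, however, differs. You expand $1/\sin u$, $1/\tan u$, $1/\ln(1+t)$ and $(1+t)^{-x}$ directly as Laurent/Taylor series with uniform $O$-remainders, which is the standard analytic route and yields a clean one-line estimate $|d_{x,y}(n)-\tfrac{x^2+y^2}{2}|\le C(a,A,B)/n$. The paper instead truncates the power series of $\sin$, $\cos$, $\ln$ and $(1+t)^{-\alpha}$, exploits the alternating-sign structure of those truncations to convert them into genuine inequalities (not $O$-bounds), and then feeds everything through a computer-algebra system to produce two explicit multivariate rational functions that sandwich $d_{x,y}(n)$ once $n$ exceeds a threshold $n_B$ chosen so that the arguments of $\sin$ and $\tan$ stay in a fixed sign regime. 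Your version is shorter and more transparent; the paper's buys fully explicit, machine-verifiable bounds at the cost of heavy symbolic computation. You should, for completeness, make the threshold explicit in your write-up as well: the Laurent expansions of $1/\sin u$ and $1/\tan u$ are only valid once $|u_n|=|y|\ln(1+1/n)$ is bounded away from $\pi$, which requires $n$ larger than some $n_1(B)$; this is the analogue of the paper's $n_B$.

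On your closing caveat: you are right that ``bounded'' as literally stated does not force $a^2+b^2>0$, and without that your final division step fails. The paper's sketch is silent on this, but its own subsequent use of the theorem (the domination function $D_d(n)=\tfrac{3(A^2+B^2)}{4|a-1|}\,n^{-1-a}$, which must be Riemann integrable) already presupposes $a>0$, so your reading of the hypothesis as ``relatively compact in $\Omega$'' is indeed the intended one.
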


\begin{proof}
This proof is very technical, and is not detailed in this paper. But here are the mains ideas:

\begin{itemize}

\item First, we observe that the sign of $d_{x, y}(n)$ does not depend on the sign of $y$. We can limit our study to $y>0$.

\item Then, we observe that for $n > n_B = \frac{2-e^{\frac{\pi}{B}}}{e^{\frac{\pi}{B}}-1}$, the signs of $\sin(y\ln(\frac{n+1}{n}))$ and $\tan(y\ln(\frac{n+2}{n+1}))$ do not change anymore.

\item We take advantage from the fact that the series expansion of $\sin(x)$, $cos(x)$, $\ln(x)$, and $\frac{1}{(1+x)^{\alpha>0}}$, when truncated, gives majorations or minorations of these function for $x>0$.

\item From here, we define two multivariate rational fractions that give upper and lower bounds for $d_{x, y}(n)$ starting from $n_B$, \textbf{Figure \ref{Framing}}.

\item The dominant coefficient of the denominator of these two fractions is an integer.

\end{itemize}

\begin{figure}[h]
\begin{center}
\includegraphics[scale=0.35]{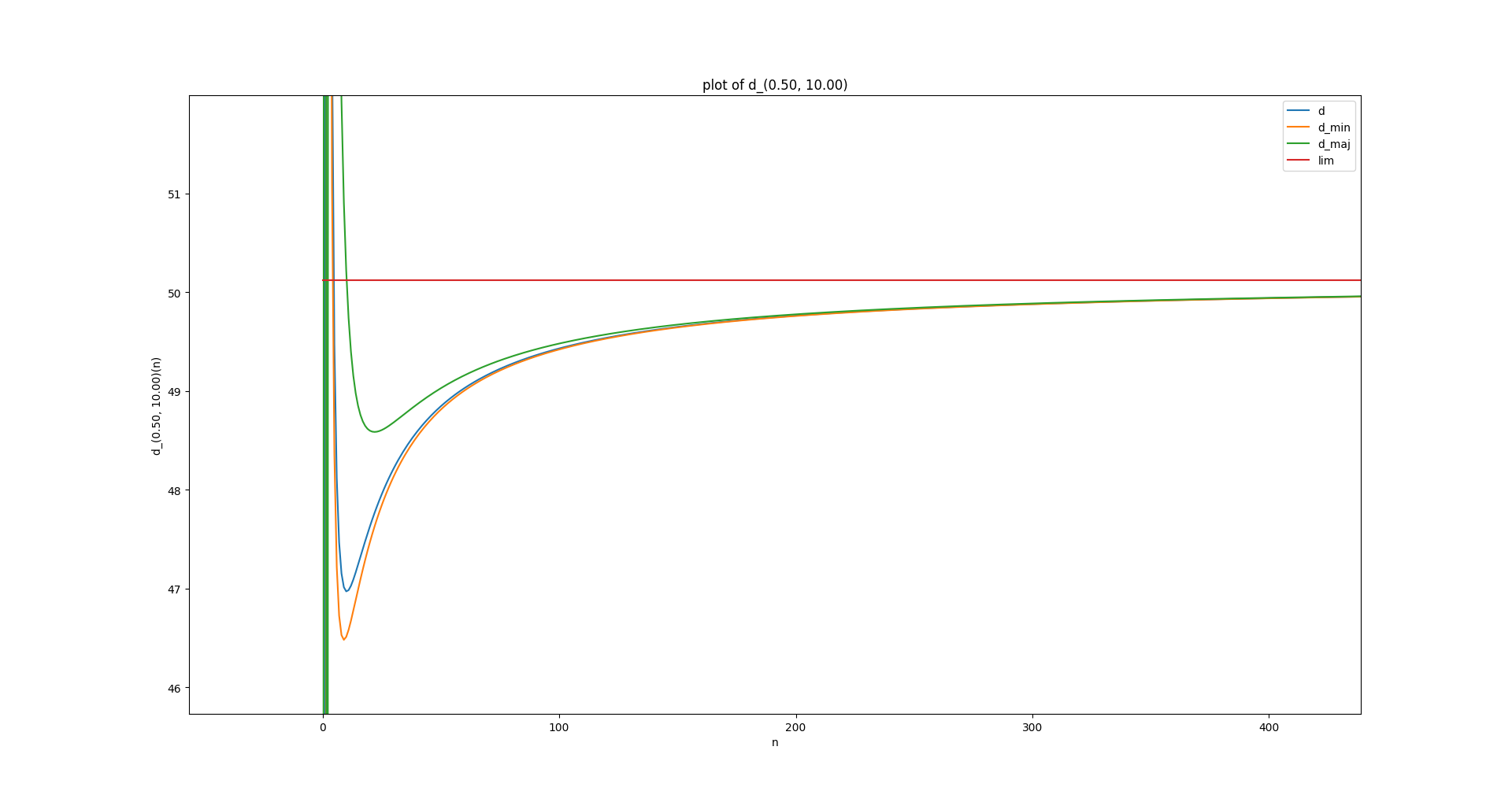}
\caption{Graph of the framing for $d_{x,y}(n)$}
\label{Framing}
\end{center}
\end{figure}

Using this last argument, it is possible to show that from a certain value of $n$, $d_{x, y}(n)$ must lie in a certain range around its limit. The code used to produce these upper and lower bounds of $d_{x, y}$ is accessible via the GitHub link at the beginning of the document.

\end{proof}

From here, it will be assumed such a domination function exists:

\begin{equation}
\begin{split}
D_{\Omega_d} &: \mathbb{R}_+^* \rightarrow \mathbb{R} \\
D_d(n) &= \frac{1}{n^{1+a}}\frac{3(A^2+B^2)}{4\abs{a-1}} \\
\end{split}
\end{equation}

This function is integrable by Riemann sommation over any $\Omega_d$ in $\Omega$. We can even specify from when this domination is effective for $\Omega_d$. It is from $n_0(\Omega_d)$, which does not depend on the choice of $z$ in $\Omega_d$.

To summarize, we have:

\begin{itemize}
    \item $ \delta_n:\Omega_d \rightarrow \mathbb{C} $ is continuous.
    \item $ \sum \abs{\delta_n(z)} $ converges over $ \Omega_d $ by Riemann sommation.
    \item $ D_d(n):\mathbb{R}_+^* \rightarrow \mathbb{R} $ dominates $ \delta_n:\Omega_d \rightarrow \mathbb{C} $ over $ \Omega_d $, and is integrable.
\end{itemize}

Therefore, $ c_0 + \sum \delta_n $ converges normally and in consequence uniformly to $ c $ over $ \Omega_d $. $ c $ is continuous over any $ \Omega_d $, so $ c $ is continuous over $ \Omega $.

Finally, we have to prove that this expression coincides with $ \zeta $ on $ \omega \subset \Omega$. It comes from the fact that:

\begin{equation}
\zeta_n(z) \underset{n\rightarrow\infty}{\rightarrow} \zeta(z), z \in \omega
\end{equation}

From \textbf{Figure \ref{Goe2}}, we can show that:

\begin{equation}
\begin{split}
&\abs{\zeta_n(z) - c_n(z)} \underset{n\rightarrow\infty}{=} \frac{1}{\abs{1-z}n^{x -1}}  + o(\frac{1}{n^{x -1}})\\
&\frac{1}{\abs{1-z}n^{x -1}} \underset{n\rightarrow\infty}{\rightarrow} 0, z \in \omega
\end{split}
\end{equation}

Therefore:

\begin{equation}
z \in \omega \Rightarrow c(z) = \zeta(z)
\end{equation}

This implies that $ c $ is complex-differentiable over $ \omega $. We now need to show that $c$ is complex-differentiable over $\Omega$, in particular on the critical strip.

\end{proof}

\subsection{Real-differenciability of $ c: \Omega \rightarrow \mathbb{C} $}

Showing that $ c $ is complex-differentiable over the critical strip is quite challenging because $ c_0 $ and $ \delta_n $ are not holomorphic functions. Therefore, the first step is to show that the Jacobian of $ \delta_n $ is well defined and continuous (real-differentiability). Only then we can verify it satisfies the Cauchy-Riemann equation (complex-differentiability).

A way to show that $c$ is indeed real-differentiable is to show that $ \sum J_{\delta_n} $ converges normally for the infinity norm. This works, but it leads to very tedious calculations that are not described here. Again, the domination is by far the most technical aspect of this proof, though the formal calculation of $ J_{\delta_n} $ gives immediately the following result:

\begin{equation}
\begin{split}
& \frac{\partial{\delta_n}_x}{\partial x} = O(\frac{\ln(n)}{n^{1+x}}) \\
& \frac{\partial{\delta_n}_x}{\partial y} = O(\frac{\ln(n)}{n^{1+x}}) \\
& \frac{\partial{\delta_n}_y}{\partial x} = O(\frac{\ln(n)}{n^{1+x}}) \\
& \frac{\partial{\delta_n}_y}{\partial y} = O(\frac{\ln(n)}{n^{1+x}})
\end{split}
\end{equation}

Therefore:

\begin{equation}
\norm{J_{\delta_n}}_{\infty} = O(\frac{\ln(n)}{n^{1+x}})
\end{equation}

We can conclude that $ \sum J_{\delta_n} $ converges for any $ (x, y) $ in $ \Omega $. We now need to find a domination function for $ J_{\delta_n} $, for the infinity norm over $ \Omega_d $.

It’s possible to reuse the technique described in the last section to find a domination function for $J_{\delta_n}$, but I think there must be a more elegant solution to this problem. For the rest of the proof, we are just going to assume that we have the following sum to converge normally for the infinity norm:

\begin{equation}
J_c((x, y)) = J_{c_0}((x, y)) + \sum_{i=1}^\infty J_{\delta_n}((x, y)), x>0, y \in \mathbb{R}^*
\end{equation}

\subsection{Complex-differenciability of $ c: \Omega \rightarrow \mathbb{C} $, equality with $ \zeta: \Omega \rightarrow \mathbb{C} $}

First, we can rewrite \textbf{Equation \ref{delta_n_1}} as:

\begin{equation}
\delta_n(z) = y\frac{1-\overline{z}}{\abs{1-z}^2}\left( \frac{2i}{n^z}\frac{1}{\frac{n+1}{n}^{2iy}-1} -\frac{i}{(n+1)^z}\frac{\frac{n+2}{n+1}^{2iy}+1}{\frac{n+2}{n+1}^{2iy}-1} + \frac{1}{(n+1)^z}\frac{1-x}{y}\right)
\end{equation}

The series $ \sum \delta_n $ converges normally, therefore we can rearrange the terms in the sum. More precisely we observe this sum is actually telescopic. We can write that:

\begin{equation}
\sum_{n=1}^N \delta_n(z) = \sum_{n=2}^N \frac{1}{n^z} + \frac{1-\overline{z}}{\abs{1-z}^2}\left( \frac{y2^{-iy}}{\sin(y\ln2)} - \frac{y}{(N+1)^z}\frac{1}{\tan(y\ln(\frac{N+2}{N+1}))} +(1-x)\frac{1}{(N+1)^z} \right)
\end{equation}

Therefore, from \textbf{Equation \ref{eq_c_0}}, we can write:

\begin{equation}
\begin{split}
c_N(z) &= c_0 + \sum_{n=1}^N \delta_n\\
&= \zeta_N(z) + \frac{1}{(N+1)^z}\frac{1-\overline{z}}{\abs{1-z}^2}\left( 1-x-\frac{y}{\tan(y\ln(\frac{N+2}{N+1}))} \right)
\end{split}
\end{equation}

Now, let us introduce:

\begin{equation}
\begin{split}
g_n(z) &= \frac{1}{(n+1)^z} \\
f_n(z) &= \frac{1-\overline{z}}{\abs{1-z}^2}\left( 1-x-\frac{y}{\tan(y\ln(\frac{N+2}{N+1}))} \right) \\
r_n(z) &= g_n(z)f_n(z)
\end{split}
\end{equation}

Because $ \sum J_{\delta_n} $ converges normally, this rearrangement is also true for the jacobian of $ c_n(z) $:

\begin{equation}
J_{c_n}((x, y)) = J_{\zeta_n}((x, y)) + J_{r_n}((x, y))
\end{equation}

$ \zeta_n(z) $ is a sum of holomorphic functions. Therefore, what we have to show is that $ J_{r_n}((x, y)) $ tends to satisfy the Cauchy-Riemann equation as $ n $ goes to infinity for any $ (x, y) \in \Omega $.

First, we compute the following quantities:

\begin{equation}
\begin{split}
\frac{\partial {f_n}_x}{\partial x} - \frac{\partial {f_n}_y}{\partial y} &\underset{n\rightarrow\infty}{=} \frac{1-x}{\abs{1-z}^2} + o(\frac{1}{n}) \\
\frac{\partial {f_n}_x}{\partial y} + \frac{\partial {f_n}_y}{\partial x} &\underset{n\rightarrow\infty}{=} \frac{1-x}{\abs{1-z}^2} + o(\frac{1}{n})
\end{split}
\end{equation}

Now, we can write:

\begin{equation}
\begin{split}
\frac{\partial {r_n}_x}{\partial x} &= \frac{\partial}{\partial x}\left( {f_n}_x{g_n}_x - {f_n}_y{g_n}_y \right) \\
&= \frac{\partial {f_n}_x}{\partial x}{g_n}_x + \frac{\partial {g_n}_x}{\partial x}{f_n}_x - \frac{\partial {f_n}_y}{\partial y}{g_n}_y - \frac{\partial {g_n}_y}{\partial y}{f_n}_y \\
\frac{\partial {r_n}_y}{\partial y} &= \frac{\partial}{\partial y}\left( {f_n}_x{g_n}_y + {f_n}_y{g_n}_x \right) \\
&= \frac{\partial {f_n}_x}{\partial y}{g_n}_y + \frac{\partial {g_n}_y}{\partial y}{f_n}_x + \frac{\partial {f_n}_y}{\partial y}{g_n}_x + \frac{\partial {g_n}_x}{\partial y}{f_n}_y
\end{split}
\end{equation}

Then, we take adventage from the fact that $ g_n $ is complex-differentiable:

\begin{equation}
\begin{split}
\frac{\partial {g_n}_x}{\partial x} - \frac{\partial {g_n}_y}{\partial y} &= 0 \\
\frac{\partial {g_n}_x}{\partial y} + \frac{\partial {g_n}_y}{\partial x} &= 0
\end{split}
\end{equation}

Therefore we find:

\begin{equation}
\abs{\frac{\partial {r_n}_x}{\partial x} - \frac{\partial {r_n}_y}{\partial y}} \leq \abs{\frac{\partial {f_n}_x}{\partial x} - \frac{\partial {f_n}_y}{\partial y}}\abs{{g_n}_x} + \abs{\frac{\partial {f_n}_x}{\partial y} + \frac{\partial {f_n}_y}{\partial x}}\abs{{g_n}_y}
\end{equation}

Exactly the same phenomenon happens for $ \abs{\frac{\partial {r_n}_x}{\partial y} + \frac{\partial {r_n}_x}{\partial y}} $, and we can write:

\begin{equation}
\begin{split}
\abs{\frac{\partial {r_n}_x}{\partial x} - \frac{\partial {r_n}_y}{\partial y}} &\underset{n\rightarrow\infty}{=} \frac{1}{(n+1)^x}\abs{\frac{1-x}{1-z}} + o(\frac{1}{(n+1)^x})\\
\abs{\frac{\partial {r_n}_x}{\partial y} + \frac{\partial {r_n}_x}{\partial y}} &\underset{n\rightarrow\infty}{=} \frac{1}{(n+1)^x}\abs{\frac{1-x}{1-z}} + o(\frac{1}{(n+1)^x})
\end{split}
\end{equation}

This is enough to conclude that $ c $ in holomorphic over $ \Omega $. In addition, it coincides with $ \zeta $ over $ \omega $ which is an open set, therefore we finally have:

\begin{equation}
c(z) = \zeta(z), z \in \Omega
\end{equation}

\subsection{Discussion one the radial convergence}

This summation procedure can remind the reader the link between the Riemann zêta function, and the Dirichlet êta function \cite{ARTICLE:2}:

\begin{equation}
\begin{split}
\eta(z) &= \sum_{n=1}^{\infty}\frac{(-1)^{n-1}}{n^z}, x>0 \\
\zeta(z) &= \frac{1}{1-2^{1-z}}\eta(z), z \neq 1, x>0
\end{split}
\end{equation}

Like the radial convergence, this definition is valid for $x>0$. However, there is a much more interesting observation to make here.

Indeed, applying Cesàro \cite{BOOK:2} summation on the original Dirichlet series makes it converge for $x>-1$. Applying it again will have the series to converge for $x>-2$. From here, one can show by inference that the Dirichlet series can be analytically extended to the whole complex plan by applying Cesàro summation infinitely many times.

It seems that we have exactly the same kind of property for the radial convergence applied on the Riemann series. This result is purely numerical and will not be proven here. But it appears that applying the radial convergence infinitely many times on the Riemann series makes it converge over the whole complex plan, private from the real axis.

An observation that is consistent with this idea is the following fact: For $x=0$, one can observe that the sequence $\left\{ c_n(z) \right\}_{z \in \mathbb{N}}$ converges to an asymptotic circle.

\section{Link with the non-trivial zeros of the Riemann zêta function}

In this section, $ \Omega $ is now the critical strip:

\begin{equation}
\Omega = \left\{x+iy, 0<x<1, y\in\mathbb{R}^* \right\}
\end{equation}

We are now going to change our point of view on $ c_n(z) $. Because it is the intersection of two straight lines in the complex plan, we can see $ c_n(z) $ as the solution of the following system:

\begin{equation}
\begin{bmatrix}
\Re c_n(z) \\
\Im c_n(z)
\end{bmatrix}
A_n = B_n
\label{azerty}
\end{equation}

With:

\begin{equation}
A_n =
\begin{bmatrix}
\Im \frac{e^{i\alpha}}{(n+1)^z} & -\Re \frac{e^{i\alpha}}{(n+1)^z} \\
\Im \frac{e^{i\alpha}}{(n+2)^z} & -\Re \frac{e^{i\alpha}}{(n+2)^z}
\end{bmatrix},
B_n =
\begin{bmatrix}
\Re \zeta_n(z) \Im \frac{e^{i\alpha}}{(n+1)^z} - \Im \zeta_n(z) \Re \frac{e^{i\alpha}}{(n+1)^z} \\
\Re \zeta_{n+1}(z) \Im \frac{e^{i\alpha}}{(n+2)^z} - \Im \zeta_{n+1}(z) \Re \frac{e^{i\alpha}}{(n+2)^z}
\end{bmatrix}
\end{equation}

We can now see explicitly that this system always has solutions:

\begin{equation}
det(A_n) = \frac{\sin(-y\ln(\frac{n+2}{n+1}))}{((n+1)(n+2))^{x}} \neq 0
\end{equation}

The complete expression of $ A_n^{-1} $ is:

\begin{equation}
A_n^{-1} =
\frac{((n+1)(n+2))^{x}}{\sin(-y\ln(\frac{n+2}{n+1}))}
\begin{bmatrix}
-\Re \frac{e^{i\alpha}}{(n+2)^z} & \Re \frac{e^{i\alpha}}{(n+1)^z} \\
-\Im \frac{e^{i\alpha}}{(n+2)^z} & \Im \frac{e^{i\alpha}}{(n+1)^z}
\end{bmatrix}
\end{equation}

We then rewrite $ B_n $ as:

\begin{equation}
B_n(z) =
\begin{bmatrix}
b_n(z) \\
b_{n+1}(z)
\end{bmatrix}
\end{equation}

With:

\begin{equation}
b_n = \frac{\abs{\zeta_n(z)}}{(n+1)^{x}}\sin\left(\arg\left(\frac{e^{i\alpha}}{(n+1)^z}\right) - \arg(\zeta_n(z))\right)
\end{equation}

For $ z \in \Omega $, it has already been proven that $ \abs{\zeta_n(z)} $ is unbounded.  Therefore, considering the \textbf{Equation \ref{azerty}}, the following assumption seems very reasonable, though it would deserve a explicit proof.

If $ z $ is a zero of $ \zeta $ over $\Omega$, then we have:

\begin{equation}
\abs{\sin\left(\arg\left(\frac{e^{i\alpha}}{(n+1)^z}\right) - \arg(\zeta_n(z))\right)} \underset{n\rightarrow\infty}{\rightarrow} 0
\end{equation}

Which with a bit of rearrangement can be formulated as the following theorem:

\begin{theorem}
\begin{equation}
z \in \Omega, \zeta(z)=0 \Rightarrow \underset{n\rightarrow\infty}{\lim} \frac{-y\ln(n+1) - \underset{sum}{\arg}(\zeta_n(z)) + \arctan(\frac{y}{1-x})}{\pi}  = \mathcal{U}_z, \mathcal{U}_z\in \mathbb{Z}
\label{reg_the}
\end{equation}
With $ \underset{sum}{\arg}(\zeta_n(z)) $ the cumulated argument of $ \zeta_n(z) $:
\begin{equation}
\underset{sum}{\arg}(\zeta_n(z)) = \sum_{j=1}^{n}\arg\left(\frac{\zeta_{j+1}(z)}{\zeta_j(z)}\right)
\end{equation}
\end{theorem}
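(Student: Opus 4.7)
My plan is to supply the rigorous version of the paper's informal claim that $\sin(\arg(e^{i\alpha}/(n+1)^z) - \arg\zeta_n(z))\to 0$, promote it to the unwrapped argument, and then identify the integer limit.

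First, I would bypass inverting $A_n$ and instead use the implicit equation of the line $\Delta_n(z)$. Writing $u_n := e^{i\alpha}/(n+1)^z$, any point $w$ on $\Delta_n(z)$ satisfies $\Im(w\bar u_n) = \Im(\zeta_n(z)\bar u_n)$. A direct computation identifies the right-hand side as $-b_n(z)$, so applying the equation to $w = c_n(z)$ and using $|u_n| = (n+1)^{-x}$ yields the key inequality
\begin{equation}
|b_n(z)| = |\Im(c_n(z)\bar u_n)| \;\leq\; \frac{|c_n(z)|}{(n+1)^{x}}.
\end{equation}
Next, by the main theorem of the previous subsection $c_n(z) \to c(z) = \zeta(z) = 0$, and by a quantitative Euler--Maclaurin expansion $\zeta_n(z) - \zeta(z) = -n^{1-z}/(z-1) + O(n^{-x})$, which for a zero gives $|\zeta_n(z)|\sim n^{1-x}/|1-z|$. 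Combining,
\begin{equation}
|\sin X_n| \;=\; \frac{(n+1)^x|b_n(z)|}{|\zeta_n(z)|} \;\leq\; \frac{|c_n(z)|}{|\zeta_n(z)|} \;\longrightarrow\; 0,
\end{equation}
where $X_n := \alpha - y\ln(n+1) - \arg\zeta_n(z)$. This rigorously establishes the paper's informal assumption preceding the theorem.

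Third, define the unwrapped quantity $\tilde X_n := \alpha - y\ln(n+1) - \underset{sum}{\arg}(\zeta_n(z))$; it agrees with $X_n$ modulo $2\pi$, hence $\sin\tilde X_n \to 0$. Writing $\zeta_{n+1}/\zeta_n = 1 + 1/((n+1)^z\zeta_n(z))$ and reusing the same asymptotic, the consecutive step
\begin{equation}
\tilde X_{n+1} - \tilde X_n = -y\ln\tfrac{n+2}{n+1} - \arg\!\left(1 + \tfrac{1}{(n+1)^z\zeta_n(z)}\right) = O(1/n)
\end{equation}
tends to $0$. An elementary trapping argument (for $n$ large, $|\sin\tilde X_n|<\epsilon$ places $\tilde X_n$ within $\epsilon$ of some $k_n\pi$; once the step size is smaller than $\pi-2\epsilon$, the sequence cannot reach the next zero of sine) then forces $\tilde X_n \to k_0\pi$ for a single integer $k_0$. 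Finally, the identity $\arctan((1-x)/y) + \arctan(y/(1-x)) = \tfrac{\pi}{2}\,\mathrm{sign}(y)$ applied to both branches of the formula for $\alpha(z)$ shows $\alpha \equiv \arctan(y/(1-x)) \pmod{2\pi}$; dividing the limit by $\pi$ produces the claimed integer $\mathcal{U}_z$.

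\textbf{Main obstacle.} The only nontrivial ingredient missing from the excerpt is the sharp lower bound $|\zeta_n(z)| \gtrsim n^{1-x}/|1-z|$ at a zero, which requires an actual Euler--Maclaurin expansion rather than the qualitative "unboundedness" invoked in the paper. Once this quantitative control is available, the key inequality $|\sin X_n| \leq |c_n(z)|/|\zeta_n(z)|$, the trapping argument for $\tilde X_n$, and the $\arctan$ reduction for $\alpha$ are all routine.
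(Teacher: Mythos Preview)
Your proposal follows the paper's outline but actually completes it: the paper explicitly admits that ``the complete proof of this theorem is not given here,'' merely calling the vanishing of $|\sin X_n|$ a ``reasonable assumption'' and asserting that ``with a bit of rearrangement'' the theorem follows. You supply precisely the three ingredients the paper omits. First, you replace the paper's qualitative ``$|\zeta_n(z)|$ is unbounded'' by the Euler--Maclaurin asymptotic $|\zeta_n(z)|\sim n^{1-x}/|1-z|$, which is what is genuinely needed to deduce $|\sin X_n|\leq |c_n(z)|/|\zeta_n(z)|\to 0$ from the line equation (your bypass of the matrix inversion via $\Im(w\bar u_n)=\Im(\zeta_n\bar u_n)$ is cleaner than the paper's $A_n^{-1}B_n$ setup but yields the same inequality). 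Second, the paper is silent on how $\sin X_n\to 0$ upgrades to convergence of the \emph{unwrapped} quantity; your trapping argument (step $\tilde X_{n+1}-\tilde X_n=O(1/n)\to 0$ forces the nearest multiple of $\pi$ to stabilise) is exactly the missing step. Third, the reduction $\alpha(z)\equiv\arctan(y/(1-x))\pmod{2\pi}$ via the complementary-arctangent identity is the ``bit of rearrangement'' the paper alludes to but never writes down.

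One small index correction: since $\zeta_1=1$, the telescoping gives $\underset{sum}{\arg}(\zeta_n(z))\equiv\arg\zeta_{n+1}(z)\pmod{2\pi}$, not $\arg\zeta_n(z)$, so $\tilde X_n$ agrees modulo $2\pi$ with $X_{n+1}+y\ln\tfrac{n+2}{n+1}$ rather than with $X_n$; the extra $o(1)$ term is harmless and the rest of your argument is unaffected.
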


Again, the complete proof of this theorem is not given here, but the numerical results are very consistent.

Let us call this integer $ \mathcal{U}_z $ when it is defined. For the known zeros of the Riemann zêta function, we can numerically compute this value. Here are the first 30 values for the 30 first non trivial zeros:

\begin{figure}[h]
\begin{center}
\begin{tabular}{|l|c|r|}
  \hline
  n & y(n) & U(1/2-i*y(n)) \\
  \hline
1	& 14.1347251417346937904572519835624766 & 8 \\
2	& 21.0220396387715549926284795938969162 & 14 \\
3	& 25.0108575801456887632137909925627734 & 18 \\
4	& 30.4248761258595132103118975305839571 & 24 \\
5	& 32.9350615877391896906623689640747418 & 28 \\
6	& 37.5861781588256712572177634807052984 & 32 \\
7	& 40.9187190121474951873981269146334247 & 38 \\
8	& 43.3270732809149995194961221654068456 & 40 \\
9	& 48.0051508811671597279424727494276636 & 46 \\
10	& 49.7738324776723021819167846785638367 & 48 \\
11	& 52.9703214777144606441472966088808216 & 52 \\
12	& 56.4462476970633948043677594767060321 & 56 \\
13	& 59.3470440026023530796536486749921759 & 60 \\
14	& 60.8317785246098098442599018245240815 & 64 \\
15	& 65.1125440480816066608750542531836072 & 68 \\
16	& 67.0798105294941737144788288965220700 & 72 \\
17	& 69.5464017111739792529268575265546586 & 76 \\
18	& 72.0671576744819075825221079698261175 & 78 \\
19	& 75.7046906990839331683269167620305404 & 84 \\
20	& 77.1448400688748053726826648563046925 & 88 \\
21	& 79.3373750202493679227635928771160578 & 90 \\
22	& 82.9103808540860301831648374947705599 & 94 \\
23	& 84.7354929805170501057353112068275569 & 96 \\
24	& 87.4252746131252294065316678509191351 & 100 \\
25	& 88.8091112076344654236823480795095125 & 104 \\
26	& 92.4918992705584842962597252418104965 & 108 \\
27	& 94.6513440405198869665979258152079645 & 110 \\
28	& 95.8706342282453097587410292192466718 & 114 \\
29	& 98.8311942181936922333244201386223539 & 118 \\
30	& 101.317851005731391228785447940292361 & 122  \\
  \hline
\end{tabular}
\caption{Values of $ \mathcal{U}_z $ for the first 30 non trivial zeros of zeta}
\label{Table}
\end{center}
\end{figure}

We can observe these values are all even, But there are no clear pattern in  their distribution despite this fact. The python code used to generate this table of values is on the GitHub. This sequence does not appear yet on: \href{https://oeis.org/}{https://oeis.org/} \cite{WEBSITE:2}.

Let us call:

\begin{equation}
\theta_n(z) = \arg\frac{\zeta_{n+1}(z)}{\zeta_n(z)}
\end{equation}

\begin{figure}[h]
\begin{center}
\includegraphics[scale=0.6]{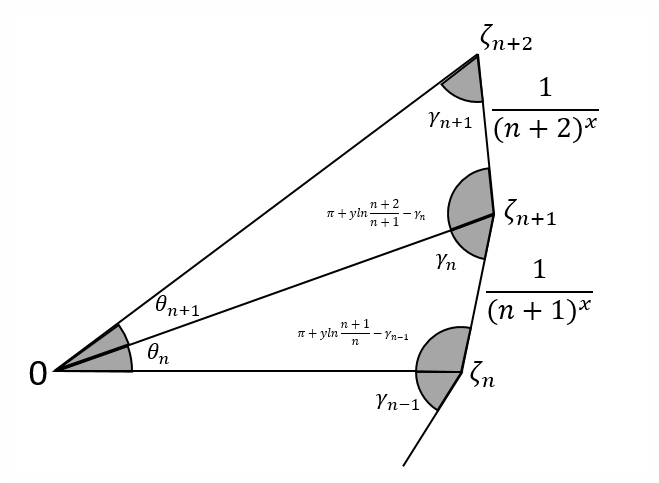}
\caption{Geometrical view of the situation for $ \theta_n(z) $}
\label{Goe3}
\end{center}
\end{figure}

In the figure \textbf{Figure \ref{Goe3}}, we also introduce $ \gamma_n(z) $ accordingly. One can show by induction that this $ \gamma_n(z) $ value is actually:

\begin{equation}
\gamma_n(z) = -y\ln(n+1) - \underset{sum}{\arg}(\zeta_n(z))
\end{equation}

Now, using geometrical considerations, it is possible to give the following induction formula for $ \sin(\theta_n(z)) $:

\begin{equation}
\sin(\theta_{n+1}(z)) = \frac{\sin(-y\ln(\frac{n+2}{n+1}) + \gamma_{n}(z))}{(n+2)^{x}\sqrt{\frac{\sin(-y\ln(\frac{n+1}{n}) + \gamma_{n-1}(z))^2}{(n+1)^{2x}\sin(\theta_n(z))^2} + \frac{1}{(n+2)^{2x}} + \frac{2\sin(-y\ln(\frac{n+1}{n}) + \gamma_{n-1}(z))}{(n+1)^{x}(n+2)^{x}\sin(\theta_n(z))}cos(-y\ln(\frac{n+2}{n+1}) + \gamma_{n}(z))}}
\end{equation}

This formula does not generalize nicely by induction. That is why the quantity $\underset{sum}{\arg}(\zeta_n(z))$ is not easy to compute.

However, from a geometrical point of view, because the Riemann series is notv bounded if $z$ is in the critical strip, one can remark that we must have $\gamma_n(z)$ to converge to $\alpha(z)$ modulo $2\pi$ as $n$ goes to infinity. This interpretation gives an explanation to why $\mathcal{U}_z$ is always even. One can remark it is in reality well defined all over the critical strip, though its convergence rate appears to be slower. From here, we can plot $\mathcal{U}_z$ for $x=\frac{1}{2}$ (\textbf{Figure \ref{Uz}}).

From this plot, we can clearly see that the zeros of the Riemann zêta function play a special role is the distribution of the values of $\mathcal{U}_z$.

\begin{figure}[h]
\begin{center}
\includegraphics[scale=0.3]{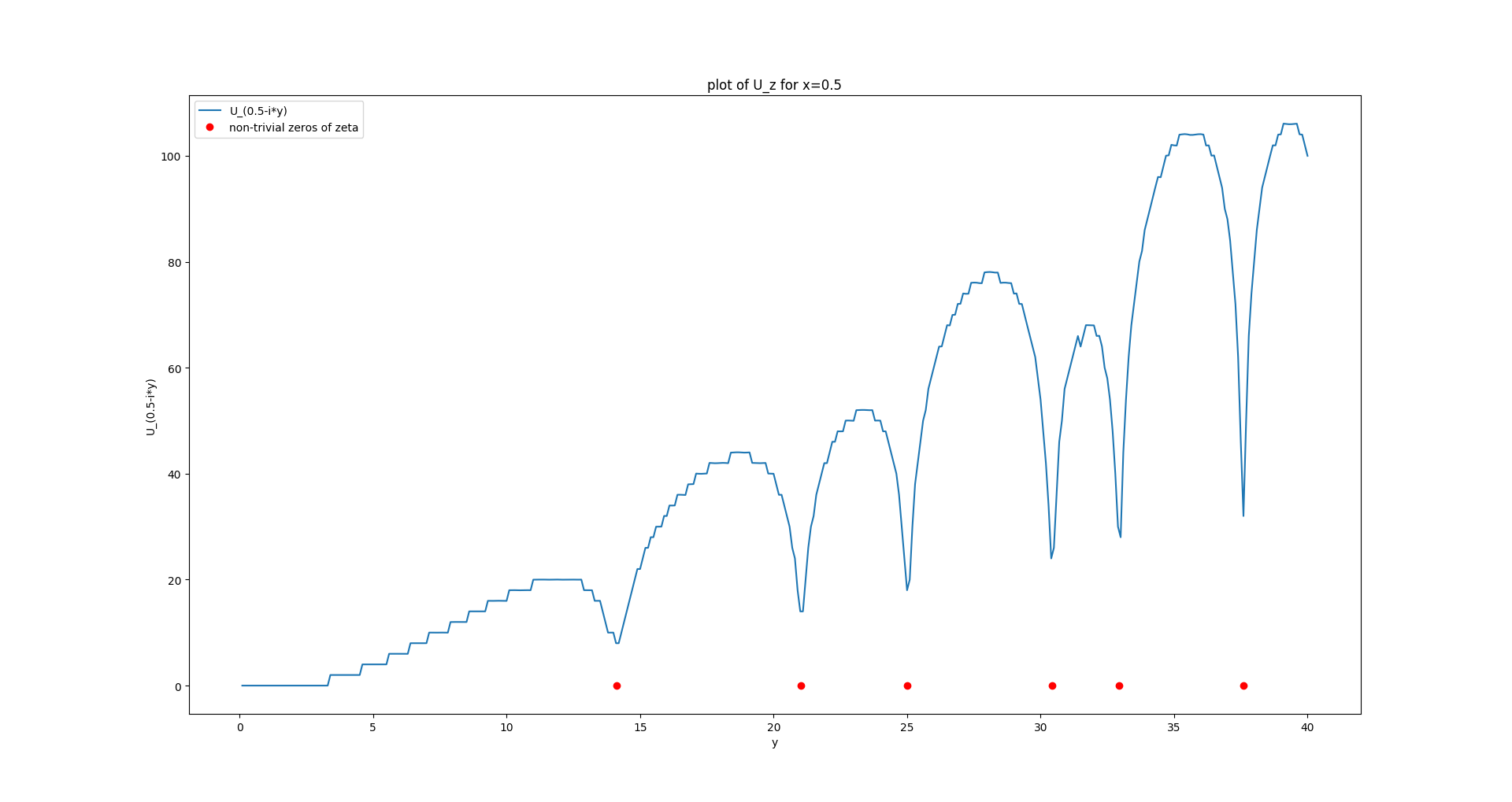}
\caption{Plot of $ \mathcal{U}_z $ for $z=\frac{1}{2}-y$}
\label{Uz}
\end{center}
\end{figure}

\bibliography{biblio} 
\bibliographystyle{ieeetr}

\end{document}